\newcommand{\R}{\mathbb{R}}
\newcommand{\Q}{\mathbb{Q}}
\newcommand{\To}{\rightarrow}
\newcommand{\vp}{\varphi}
\theoremstyle{plain}
\newtheorem{Thm}{Theorem}
\newtheorem{Cor}{Corollary}
\newtheorem{Lemma}[Cor]{Lemma}
\newtheorem{Prop}[Cor]{Proposition}
\newtheorem{Conj*}{Conjecture}
\theoremstyle{remark}
\newcommand{\J}{{J}acobian\ }
\newcommand{\p}{polynomial\ }
\newcommand{\pk}{{P}inchuk\ }
\newcommand{\nz}{nonzero\ }
\newcommand{\xyp}{$(x,y)$-plane\ }
\newcommand{\sa}{semi-algebraic\ }
\newcommand{\fe}{field extension\ }
\begin{document}
\title{{P}inchuk maps and  function fields}
\author{L. Andrew Campbell}
\address{908 Fire Dance Lane \\
Palm Desert CA 92211 \\ USA}
\email{landrewcampbell@earthlink.net}
\keywords{rational function fields, real {J}acobian conjecture}
\thanks{2010 {\it Mathematics Subject Classification.}  
Primary 14R15; Secondary 14P10 14P15 14Q05}

\begin{abstract}
All  counterexamples of Pinchuk type  to the strong real {J}acobian 
conjecture are shown to have  rational function field 
extensions of degree six with no nontrivial automorphisms. 
\end{abstract}

\maketitle

\section{Introduction}\label{intro}

The unresolved 
\J Conjecture  asserts that 
a polynomial map $F: k^n \To k^n$, 
where $k$ is a field of characteristic zero, has a polynomial inverse if its \J determinant, $j(F)$, is a \nz element of $k$.
For $k=\R$ the similar Strong Real \J Conjecture was that 
$F$ is bijective if it is a \p map and 
 $j(F)$, though perhaps not a constant,
vanishes nowhere on $\R^n$. 
However, Sergey {P}inchuk exhibited a family 
of counterexamples for $n=2$, 
now usually called {P}inchuk maps.

Any \p endomorphism of $\R^n$, 
whose \J determinant does not vanish identically,
induces a finite algebraic extension of rational function fields,
and if it is bijective it is easy to show that the extension  is
of odd degree, with no nontrivial automorphisms.
This  extension  is investigated for a previously well studied {P}inchuk map. 
A primitive element is found, its minimal polynomial 
is calculated, and the degree ($6$) and automorphism group
(trivial) of the extension are determined. 
That generalizes to  any {P}inchuk map
 $F(x,y)=(P(x,y),Q(x,y))$ defined over
any subfield $k$ of $\R$. 
Although $F$ is generically two to one as a polynomial map of
$\R^2$ to $\R^2$, the degree of the associated 
extension of function fields  $k(P,Q) \subset k(x,y)$ 
is $6$ and $k(x,y)$ admits no nontrivial automorphism 
that fixes all the elements of $k(P,Q)$ (Theorem \ref{extension}).

\section{A specific Pinchuk map}\label{specific}

Pinchuk maps are certain polynomial maps $F=(P,Q): \R^2 \to \R^2$
that have an everywhere positive Jacobian determinant $j(P,Q)$,
and are not injective \cite{Pinchuk}. The polynomial $P(x,y)$
is constructed by defining
$t=xy-1,h=t(xt+1),f=(xt+1)^2(t^2+y),P=f+h$.
The polynomial $Q$ varies for different Pinchuk maps,
but always has the form
$Q =q - u(f,h)$, where $q= -t^2 -6th(h+1)$ and
$u$ is an auxiliary polynomial in $f$ and $h$,
chosen so that
$j(P,Q) = t^2 + (t+f(13+15h))^2 + f^2$.

The specific \pk map used here 
 is one introduced by Arno van den Essen via an email to colleagues in June 1994. It is defined
\cite{ArnoBook} by choosing
\begin{equation}\label{ueq}
u =
170fh + 91h^2 + 195fh^2 + 69h^3 + 75fh^3+
\frac{75}{4}h^4.
\end{equation}
The total degree in $x$ and $y$ of $P$ is $10$ and that of $Q$ is $25$. 

The image, multiplicity  and asymptotic behavior of $F$
were studied in
\cite{PPR,PPRErr,aspc}. Its asymptotic variety, $A(F)$,
is the set of points in the image plane that are finite limits of the value of $F$ along curves that tend to infinity  in the \xyp \cite{asympvals,asymptotics}. It may alternatively be defined as the set of points in the image plane that have no neighborhood with
a compact inverse image under $F$
\cite{notproper,realtrans,geometry}.
It is a topologically closed curve in the
image $(P,Q)$-plane and is the image of a real line under a bijective \p parametrization. 
It is depicted below using
differently scaled $P$ and $Q$ axes. It intersects the vertical axis at $(0,0)$ and $(0,208)$. Its leftmost point is $(-1,-163/4)$,
and that is the only singular point of the curve.

\begin{figure}[ht]
\centerline{
\includegraphics[height=2in,width=6in]{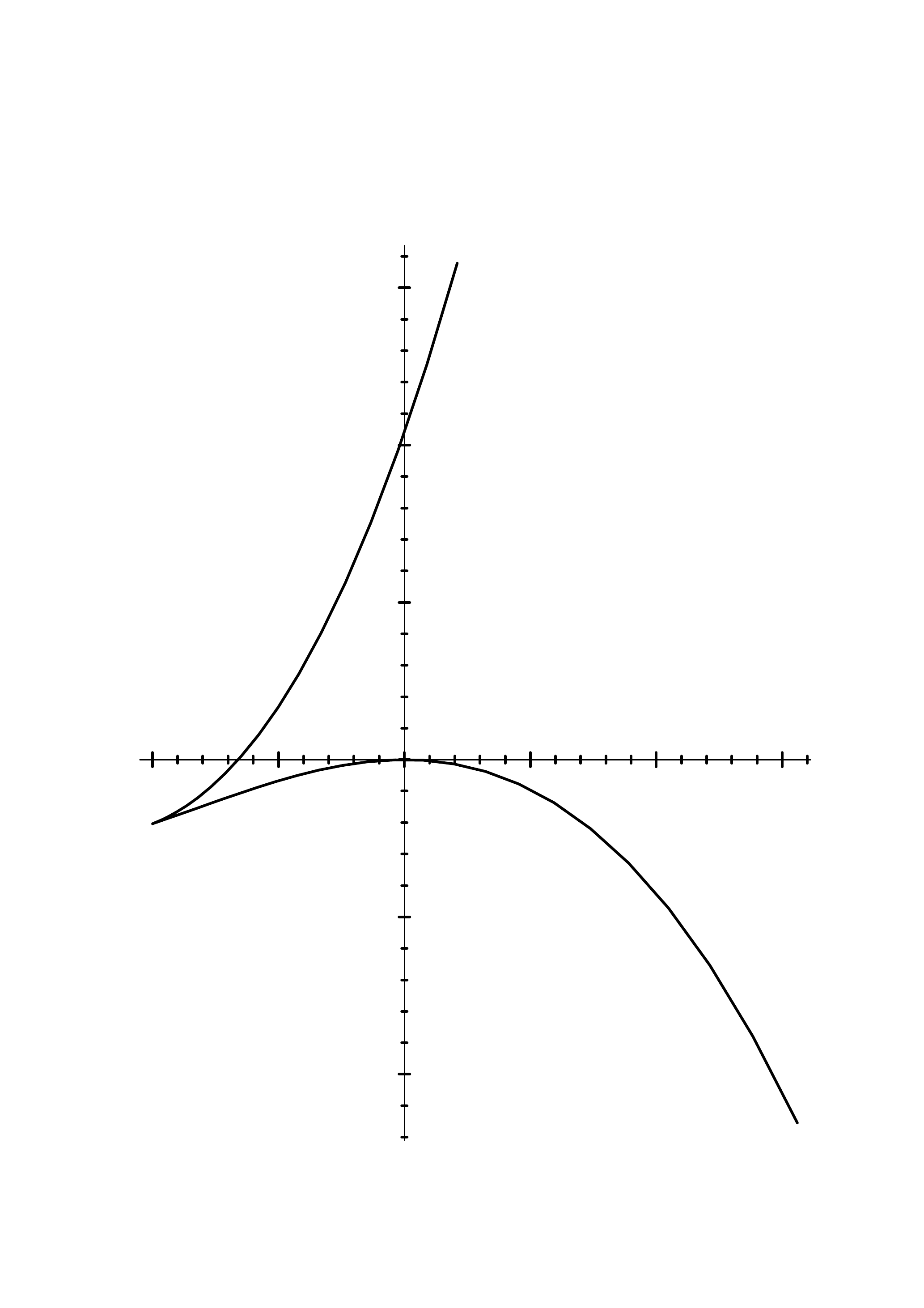}}
\caption{The asymptotic variety of the Pinchuk map $F$.}
\end{figure}

The image plane coordinates of points on $A(F)$ satisfy 
\cite{aspc} 
the irreducible \p equation
\begin{equation*}
(Q -(345/4)P^2 -231P -104)^2 = (P+1)^3(75P + 104)^2.
\end{equation*}
The point
$(-104/75,-18928/375)$ (approximately $(-1.38,-50.47)$)
also satisfies this equation, and so lies 
in the Zariski closure of $A(F)$, but does not lie on the curve $A(F)$ itself.
So $A(F)$ is an irreducible \sa set, 
but not an actual algebraic variety.

The points $(-1,-163/4)$ and $(0,0)$ of $A(F)$ have no inverse image under $F$, all other points of $A(F)$ have one inverse
image, and all points of the image plane not on $A(F)$ have two.

\section{Minimal polynomial calculation}\label{ext}

This paragraph is a summary of some key facts from previously cited work on $F$.
 A general level set $P=c$
in the $(x,y)$-plane has a rational parametrization.
Specifically, for any real $c$ that is not $-1$ or $0$,
the equations 
$$x(h) = \frac{ (c-h)(h+1) }{  (c-2h-h^2)^2 }$$
$$y(h) = \frac{ (c-2h-h^2)^2(c-h-h^2) }{ (c-h)^2 },$$
define a rational map pointwise on a real line with coordinate $h$, except where a pole occurs.
The use of $h$ as a parameter and the equality $P=c$ 
are consistent: on
 substitution 
the expression  $h(x(h),y(h))$ simplifies to  $h$,
and $P(x(h),y(h))$ to $c$.
There is always a pole at $h=c$ and
$Q(x(h),y(h))$ tends to $-\infty$ as the pole
is approached from either side. 
Also, $Q(x(h),y(h))$ tends to $+\infty$ as $h$ tends 
to $+\infty$ or
 to $-\infty$. 
If $c > -1$, there are two additional poles
at $h = -1 \pm \sqrt{1+c}$ and
$Q(x(h),y(h))$ tends to a finite asymptotic value at
each of these poles  as the pole
is approached from either side. 
For example, if $c=3$, there are poles at $-3$, $1$, and $3$,
and the corresponding values of $Q$ are 
$14965/4$, $-4235/4$, and $-\infty$, respectively. 
In such  cases
the asymptotic values are distinct and are the values of $Q$ at the two points of intersection of the vertical line $P=c$ and $A(F)$ in the $(P,Q)$-plane. 
The level sets $P=c$ are disjoint unions of 
their connected components, which are 
curves that are smooth (because of the \J condition) and tend 
to $\infty$ in the $(x,y)$-plane at both ends. The number of 
curves is two if $c < -1$, and four if $-1 < c \ne 0$. Even 
the two exceptional values fit this pattern, although they require 
different rational parametrizations, with $P=-1$ consisting of 
four curves and $P=0$ of five.

$F$ is not birational, because it is generically two to one.
Throughout this section, consider only the coefficient field $k=\R$.
To begin the exploration of the \fe $k(P,Q)\subset k(x,y)$,
rewrite the parametrization above in terms of $f$ and $h$,
using the relations  $P=c$ and $P=f+h$ to obtain
$$x =  f(h+1)(f-h-h^2)^{-2}$$
$$y = (f-h-h^2)^2(f-h^2)f^{-2},$$
which are identities in $k(x,y)$
(and so  $k(x,y) =  k(f,h)$).
It follows that
$xy = (h+1)(f-h^2)/f,
t  = xy -1 = [(h+1)(f-h^2)-f]/f
    = [fh-h^2-h^3]/f 
   = (h/f)[f-h(h+1)],
 q = -t^2-6th(h+1)
   = -h^2f^{-2}\{[f-h(h+1)]^2  + 6(h+1)[f-h(h+1)]f\}.$
In fact,
\begin{align*}
q &= -h^4(h+1)^2/f^2 + [2h^3(h+1)+6h^3(h+1)^2]/f
     + [-h^2 -6h^2(h+1)] \\
  &= -h^4(h+1)^2/f^2
   + h^3(h+1)(6h+8)/f
   -h^2(6h+7).
\end{align*}
Using that equation,  the definition $Q=q-u(f,h)$, and 
equation \eqref{ueq} for $u$, one can express $Q$ in terms 
of $f$ and $h$ alone. 
Clearing denominators $f^2Q = f^2q - f^2u$, or,
arranged by powers of $f$, 
\begin{align*}
f^2Q &=  -h^4(h+1)^2 \\
      &+f [h^3(h+1)(6h+8)] \\
      &+f^2[-h^2(6h+7)-91h^2 -69h^3-(75/4)h^4] \\
      &+ f^3[-170h-195h^2-75h^3].
\end{align*}
Now substitute $P-h$ for $f$ and collect in powers of $h$
to obtain a \p relation
\begin{equation*}
(197/4) h^6 + \cdots +(2PQ - 170P^3)h -P^2Q = 0.
\end{equation*}
Let $R(T)$  be the corresponding polynomial in $T$ with root $h$.  

Straightforward computations show that
\begin{align*}
R(T) &=(197/4)T^6 + (104 -(363/2)P)T^5
+(63-421P+(825/4)P^2)T^4\\
 &+(-306P+510P^2-75P^3)T^3
+(-Q+412P^2-195P^3)T^2\\
 &+(2PQ-170P^3)T -P^2Q.
\end{align*}

The coefficient of each power of $T$ is a \p in $P$ and $Q$ with rational  coefficients,
and has  total degree in $P$ and $Q$ at most $3$.
Since the leading coefficient of $R(T)$ is a real constant,
the fact that $R(h) = 0$  shows that $h$ is
integral over $k[P,Q]$. 
The identity $R(h) = 0$ in $\Q[x,y]$ was successfully
checked by a computer symbolic algebra program.

Let $m(T)$ be the polynomial in $k[P,Q][T]$, $ T$ an indeterminate,
which has leading coefficient $1$ and satisfies $ m(h) = 0$  in $k[x,y]$,
and which is of minimal degree. 
 Clearly $m$ is  irreducible in both 
$k[P,Q][T]$ and $k[P,Q,T]$, 
hence by the Gauss Lemma, in
$k(P,Q)[T]$. That implies that $m$ is also of minimal degree
over $k(P,Q)$, that $m$ divides any polynomial in $k[P,Q][T]$
with $h$ as a root, and that $m$ is unique.

Factorize $F$ as $\R^2 \To  V(m) \To \R^2$,
where the first map is $(P,Q,h)$, 
$V(m)$ is the zero locus of $m$ in $\R^3$, 
and the second map is the projection 
onto the first two components. 
The first map is birational ($k(P,Q)(h) = k(x,y)$) and the second
is finite  by integrality. If 
$w=(w_1,w_2)$  is a point of the $(P,Q)$-plane, call 
$r$ a root of $m$ over $w$, if 
$m(w_1,w_2,r)$ (write as $m(w,r)$) is zero.

\begin{Lemma}\label{gl}
For generic $w$, $m$ has exactly two real roots over $w$, they are simple and distinct, $w$ has exactly two inverse images $v$ and $v'$ under $F$, and the real roots of $m$ over $w$ are $r=h(v)$ and $r'=h(v')$.
\end{Lemma}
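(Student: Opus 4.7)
The plan is to carve out a Zariski-open $U \subset \R^2_{(P,Q)}$ by intersecting the complements of a few proper subvarieties, and to verify on $U$ the four assertions: (a) each preimage of $w$ under $F$ yields a real root of $m(w,T)$ via $v \mapsto h(v)$; (b) distinct preimages yield distinct roots; (c) every real root of $m(w,T)$ comes from a preimage; (d) $m(w,T)$ has no repeated roots.

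Assertion (a) will follow immediately from the polynomial identity $m(P,Q,h) \equiv 0$ in $k[x,y]$, by evaluation at $v$. For (b), the rational parametrization recalled at the start of this section expresses $x,y$ as rational functions of $(P,h)$, so the polynomial map $(x,y) \mapsto (P,h)$ is birational, with rational inverse given by that parametrization; it is therefore injective off a proper real algebraic subvariety $Z \subset \R^2_{(x,y)}$. Excluding the Zariski closure of $F(Z)$ from the $(P,Q)$-plane ensures that for $v \ne v'$ both lying over $w$, the equality $P(v) = P(v')$ forces $h(v) \ne h(v')$. Assertion (d) will hold because $m$, being irreducible over the characteristic-zero field $k(P,Q)$, is separable; its discriminant in $T$ is a nonzero element of $k[P,Q]$, whose vanishing locus is another proper subvariety to discard.

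The substantive step is (c). Given a real root $r''$ of $m(w,T)$, the idea is to substitute $c = w_1$ and $h = r''$ into the parametrization to produce a real point $v'' := (x(w_1,r''), y(w_1,r''))$, well-defined outside the proper algebraic locus where $w_1 = r''$ or $w_1 = (r'')^2 + 2r''$. By construction of the parametrization, $P(v'') = w_1$ and $h(v'') = r''$, so $m(w_1, Q(v''), r'') = 0$. Inspection of $R(T)$ shows it is linear in $Q$, hence so is its factor $m$; degree zero in $Q$ is impossible, since it would make $h$ algebraic over $k(P)$, contradicting that $k(P,h) = k(x,y)$ has transcendence degree two over $k$. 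Writing $m = Q\,\alpha(P,T) + \beta(P,T)$, irreducibility of $m$ forces $\alpha$ and $\beta$ to be coprime in $k[P,T]$, so the locus where $\alpha(w_1,r'') = 0$ for some real root $r''$ of $m(w,T)$ is a proper subvariety to exclude. Off it, the linear equation $m(w_1,Q,r'') = 0$ determines $Q$ uniquely; since $w_2$ satisfies the same equation, $Q(v'') = w_2$, and hence $v'' \in F^{-1}(w)$ with $r'' = h(v'')$, accounting for $r''$ via (a).

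The main obstacle will be precisely (c): ruling out \emph{spurious} real roots of $m(w,T)$ that do not arise from any preimage of $w$. The key observation defusing this is the linearity of $m$ in $Q$, which makes the parametrization both necessary and sufficient for reconstructing $F^{-1}(w)$ from the real root set of $m(w,T)$.
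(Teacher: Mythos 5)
Your proposal is correct in substance but reaches the lemma by a genuinely different mechanism than the paper. The paper's proof is soft and geometric: it takes a bi-regular isomorphism from a Zariski open subset $O$ of the $(x,y)$-plane onto a Zariski open, nonsingular subset $S$ of $V(m)$, discards the at most one-dimensional semi-algebraic union of $F(\R^2\setminus O)$, the projection of $V(m)\setminus S$, and $A(F)$, and then reads off the conclusion: over a good $w$ every real point of $V(m)$ lies in $S$, hence comes from $O$, hence from a preimage of $w$. You replace this with two explicit algebraic devices. First, the rational parametrization $(P,h)\mapsto(x,y)$ is used as a concrete section manufacturing a candidate preimage $v''$ from any real root $r''$. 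Second --- the genuinely new observation --- the $Q$-part of $R(T)$ is $-Q(T-P)^2$, so $R$ and hence its monic factor $m$ have $Q$-degree at most one, and degree zero is impossible since $k(P,h)=k(x,y)$ has transcendence degree two; writing $m=Q\,\alpha(P,T)+\beta(P,T)$ with $\alpha,\beta$ coprime (by irreducibility of $m$) then pins down $Q(v'')=w_2$ away from the finitely many values of $P$ coming from $V(\alpha,\beta)$. This is a clean, self-contained way to rule out spurious real roots, and your discriminant argument for simplicity is if anything tidier than the paper's appeal to nonsingularity of $V(m)$. The remaining genericity bookkeeping (resultants of $m$ with $T-P$ and with $P-2T-T^2$ to keep $r''$ off the poles of the parametrization, and the Zariski closure of the $F$-image of the non-injectivity locus of $(P,h)$ for distinctness) all goes through.

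One ingredient is missing. Your assertions (a)--(d) establish a bijection between $F^{-1}(w)$ and the set of real roots of $m(w,T)$, together with their simplicity, but they do not by themselves yield the number \emph{two}: for that you must also exclude the Zariski closure of $A(F)$ and invoke the fact, recorded at the end of Section~\ref{specific}, that every point of the image plane not on $A(F)$ has exactly two inverse images under $F$. Add that exclusion to your open set $U$ and the lemma follows in full.
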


\begin{proof}
Take a bi-regular isomorphism from a Zariski open subset $O$ 
of the $(x,y)$-plane to a Zariski open subset of $V(m)$. 
The image is a nonsingular surface $S$.
In the usual (strong) topology it has
a finite number of connected components that are open subsets of $S$ and of $V(m)$. 
Take the union of i) the image of the complement of $O$ under $F$, ii) the projection of the complement of $S$, and iii)   $A(F)$.  The union is \sa  of maximum dimension $1$. Take $w$ in the complement of the Zariski closure of that union. Any root that lies over $w$ is a nonsingular point of  $V(m)$ (by construction), and so is a simple, not multiple, real root.
There are exactly two points, say $v$ and $v'$, that map to $w$ under $F$. Their images under $h$, $r$ and $r'$, lie over $w$.
By construction $(w,r)$ and $(w,r')$ lie in $S$, and $v$ and $v'$ lie in $O$. Hence $r$ and $r'$ are distinct. No point in the complement of $O$ can map to $(w,r)$ or $(w,r')$
(by construction), and $v$ and $v'$ are the only points
in $O$ that do so.
\end{proof}

\begin{Cor}
The $T$-degree of $m$ is even.
\end{Cor}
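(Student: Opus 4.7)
The plan is to evaluate $m$ at a generic real point $w$ of the $(P,Q)$-plane and exploit the fact that the complex roots of a polynomial in $\R[T]$ come in conjugate pairs. First I would enlarge the \sa exceptional set appearing in the proof of Lemma \ref{gl} so that it also contains the zero locus of the leading $T$-coefficient of $m$. That coefficient is a nonzero element of $\R[P,Q]$, hence vanishes on a proper closed algebraic subset. For $w$ in the complement of the Zariski closure of the enlarged set, $m(w,T)$ is an element of $\R[T]$ whose degree is exactly $\deg_T(m)$.

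Now Lemma \ref{gl} asserts that for such $w$ the polynomial $m(w,T)$ has \emph{exactly} two real roots, both simple. Every other complex root of $m(w,T)$ is therefore non-real, and since $m(w,T) \in \R[T]$ these non-real roots occur in complex-conjugate pairs of equal multiplicity. Hence the sum of the multiplicities of the non-real roots is even, and adding the two simple real roots gives $\deg_T(m) = 2 + 2k$ for some integer $k \geq 0$.

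No substantive obstacle is anticipated; the only care needed is the bookkeeping that guarantees $m(w,T)$ really has $T$-degree $\deg_T(m)$ at the generic $w$ one chooses, which is handled by the standard trick of excluding the vanishing locus of the leading $T$-coefficient before invoking the lemma.
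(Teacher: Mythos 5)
Your argument is essentially the paper's own proof: for a generic $w$ as in Lemma \ref{gl} there are exactly two simple real roots, and the non-real roots of $m(w,T)\in\R[T]$ pair off under conjugation, forcing the degree to be even. The extra bookkeeping about the leading $T$-coefficient is harmless but unnecessary, since $m$ is by definition monic in $T$, so $m(w,T)$ automatically has degree $\deg_T(m)$ for every $w$.
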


\begin{proof}
The complex roots over $w$ that are not real occur in
complex conjugate pairs.
\end{proof}

Let $m_0$ be the term of $m(T)$ of degree $0$ in $T$. Clearly, 
$m_0 \in k[P,Q]$ is not the zero polynomial. Since $m(T)$  
divides $R(T)$, $m_0$ divides $P^2Q$. Since the $T$-degree 
of m is even, $m_0(w)$ is the product of all the roots, real and complex, of $m$ over $w$, for any $w$ in the $(P,Q)$-plane. 

\begin{Prop}
$m_0$ is a positive constant multiple of $-P^2Q$.
\end{Prop}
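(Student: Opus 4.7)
The plan is to combine the divisibility $m_0 \mid P^2 Q$ (coming from $m \mid R$) with a geometric determination of the zero locus of $m_0$ and a local multiplicity argument at $P = 0$. Taking constant-in-$T$ terms of $R = m \cdot s$ gives $m_0 s_0 = -P^2 Q$, so $m_0$ divides $-P^2 Q$ in the UFD $k[P, Q]$; up to a nonzero scalar, $m_0$ is one of $1$, $P$, $P^2$, $Q$, $PQ$, $P^2 Q$.

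To locate the zero set geometrically, I use that $m(P, Q, h) = 0$ as an identity in $k[x, y]$, so $m_0(F(x_0, y_0)) = 0$ whenever $h(x_0, y_0) = 0$. Since $h = t(xt + 1)$, the locus $\{h = 0\}$ splits as $\{t = 0\} \cup \{xt + 1 = 0\}$. Direct substitution (using $f = (xt + 1)^2(t^2 + y)$, $q = -t^2 - 6th(h+1)$, and the fact that every monomial of $u$ contains $h$) shows that $F$ maps $\{t = 0\}$ onto $\{(y, 0) : xy = 1\}$, Zariski dense in $\{Q = 0\}$, and $\{xt + 1 = 0\}$ onto $\{(0, -1/x^2) : x \neq 0\}$, Zariski dense in $\{P = 0\}$. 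Hence $m_0$ vanishes on $\{P = 0\} \cup \{Q = 0\}$, forcing $PQ \mid m_0$; combined with the previous step, $m_0 \in \{cPQ,\ cP^2 Q : c \in k^\times\}$.

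To force the additional factor of $P$ and pin down the sign, I would analyze $m(P, Q_0, T)$ for generic $Q_0 < 0$ and $P$ near $0$. There the two real preimages of $(0, Q_0)$ are the two real points of $\{xt + 1 = 0\}$ with $x = \pm 1/\sqrt{-Q_0}$, both satisfying $h = 0$. Because $j(F) > 0$ everywhere, the local inverses $v_1, v_2$ depend analytically on $(P, Q)$, and the corresponding real roots of $m$ expand as $r_i(P, Q_0) = \alpha_i(Q_0)\,P + O(P^2)$; here $\alpha_i$ is $\nabla h(v_i)$ paired with the first column of $(DF(v_i))^{-1}$, which is nonzero for generic $Q_0$ since $\nabla h$ is nonzero on $\{xt + 1 = 0\}$ ($h_y = -x \neq 0$ there). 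For $Q_0 < 0$ the equation $x^2 = -1/Q_0$ admits only real solutions, so no non-real preimage of $(0, Q_0)$ lies on $\{xt + 1 = 0\}$; the remaining roots of $m(0, Q_0, T) = 0$ are thus nonzero and come in complex-conjugate pairs with positive product. Expanding $m_0(P, Q_0) = \prod_i r_i(P, Q_0)$ shows the order of $P$ in $m_0$ is exactly two, ruling out $cPQ$ and giving $m_0 = cP^2 Q$ for some $c \in k^\times$. For the sign, fix $P_0 \neq -1, 0$ and let $Q_0 \to -\infty$: by the parametrization of Section 2, $Q(x(h), y(h)) \to -\infty$ only as $h \to P_0^\pm$, so both real roots of $m(P_0, Q_0, T)$ converge to $P_0$, giving $r_1 r_2 \to P_0^2 > 0$; combined with the positive complex-root product, $m_0(P_0, Q_0) > 0$ for $Q_0$ sufficiently negative. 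Since $c P_0^2 Q_0 > 0$ with $Q_0 < 0$ forces $c < 0$, $m_0$ is a positive constant multiple of $-P^2 Q$.

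The main obstacle is the multiplicity step: neither $m_0 \mid P^2 Q$ nor the zero-set equality alone distinguishes $cPQ$ from $cP^2 Q$, and the real content lies in the branch analysis showing that both real roots $r_i$ vanish to order exactly one in $P$. This depends on the generic nondegeneracy $\alpha_i(Q_0) \neq 0$ together with the observation that, for $Q_0 < 0$, no additional root of $m(0, Q_0, T)$ accidentally coincides with $T = 0$.
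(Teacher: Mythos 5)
Your proposal is correct in substance but takes a genuinely different route for the decisive step. You and the paper share the skeleton: $m_0\mid P^2Q$ from $m\mid R$, then vanishing of $m_0$ on $\{P=0\}$ and $\{Q=0\}$ (the paper gets this from the two single points $F(1,0)=(0,-1)$ and $F(1,1)=(1,0)$ with $h=0$, which suffices given the short list of possible divisors; your whole-curve computation on $\{t=0\}\cup\{xt+1=0\}$ is a clean alternative). The divergence is in extracting the second factor of $P$. You do it by a local order-of-vanishing argument along $Q=Q_0<0$ near $P=0$: both real roots are $h$-values of the two preimages on $\{xt+1=0\}$, each is $O(P)$ by the inverse function theorem, the remaining roots are bounded, so $\mathrm{ord}_P\,m_0(\cdot,Q_0)\ge 2$, killing $cPQ$; the sign then comes from one limit $Q_0\to-\infty$ along a line $P=P_0$. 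The paper avoids any multiplicity analysis entirely: it runs the ``$Q_0\to-\infty$, both roots tend to $c$, product of all roots positive'' argument \emph{twice}, once for $2<c<4$ (forcing the scalar to be negative whether the factor is $P$ or $P^2$) and once for $-4<c<-2$ (where $cPQ$ with a negative scalar would make the product negative), so the same sign computation that pins down the constant also forces the even power of $P$. Your approach costs a branch/multiplicity analysis but is arguably more conceptual; the paper's is shorter and needs only the generic-fiber description of Lemma \ref{gl}.

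Two loose ends in your write-up, neither fatal. First, the claim that $\alpha_i(Q_0)\ne 0$ does not follow merely from $\nabla h\ne 0$ on $\{xt+1=0\}$ (you need the pairing with a specific column of $(DF)^{-1}$ to be nonzero); but you do not actually need order \emph{exactly} two --- since $m_0$ divides $P^2Q$, the lower bound $\mathrm{ord}_P\ge 2$ already excludes $cPQ$, and that lower bound needs only $r_i(0,Q_0)=0$ plus analyticity. Likewise the assertion that the remaining roots of $m(0,Q_0,T)$ are nonzero (argued somewhat loosely, since roots of $m$ are not themselves preimages) is only needed for the exact order. Second, you should invoke Lemma \ref{gl} to ensure $(P,Q_0)$ is generic for the relevant values, so that the real roots of $m$ over $w$ are exactly $h(v),h(v')$ and the rest occur in conjugate pairs; the paper is careful to choose its lines $P=c$ inside the generic locus for precisely this reason.
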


\begin{proof}
It is known that $m_0$ divides $P^2Q$.
$F(1,0)=(0,-1)$ and $h(1,0)=0$, so $m_0(0,-1)=0$. 
This  shows that $P$ must divide $m_0$, for otherwise
$m_0(0,-1)$ would be nonzero. Next, $F(1,1)=(1,0)$ and $h(1,1)=0$, so $m_0(1,0)=0$. So $Q$ divides $m_0$.
Next, consider the union of
the vertical lines $P=c$ in the $(P,Q)$-plane, for $2 < c< 4$.
At least one such line must contain a point $w$ that is generic
in the sense of Lemma \ref{gl}, for otherwise there would be an
open set of nongeneric points. Choose such a $c$, and note that
all but finitely many points of the line $P=c$ are generic.
The level set $P=c$ in the $(x,y)$-plane has the rational parametrization by $h$ already described, with a pole at $h=c$, at which $Q$ tends to $-\infty$. 
Take a point $w = (c,d)$ with $d$ negative and sufficiently large.
Then $w$ will be generic and its two inverse images under $F$ will
have values of $h$ that approach $c$ as $d$ tends to $-\infty$,
one value of $h$ less than $c$ and the other greater,
The product of all the
roots of $m$ over $w$ will be positive, since the nonreal roots
occur as conjugate pairs. Since $P$ is positive and $Q$ negative,
the numerical coefficient of $m_0$ must be negative, regardless of
whether $m_0$ is exactly divisible by $P$ or by $P^2$.
Finally, make a
similar argument for a suitable line $P=c$, with $-4 < c < -2$.
Consideration of signs shows that $m_0$ must be divisible by $P^2$,
which yields the desired conclusion.
\end{proof}

\begin{Cor}\label{not2}
The $T$-degree of $m$ is not $2$.
\end{Cor}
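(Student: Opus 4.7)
The plan is to argue by contradiction, assuming the $T$-degree of $m$ is $2$. Writing $m(T)=T^2+a(P,Q)T+m_0(P,Q)$, the constant term $m_0$ equals the product of the two roots of $m$ over any point $w$ of the $(P,Q)$-plane. By the preceding proposition, $m_0=-\alpha P^2Q$ for some positive $\alpha\in k$.

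I would then reuse the setup from the proof of that proposition. Pick $c$ with $2<c<4$ so that the vertical line $P=c$ contains only finitely many nongeneric points in the sense of Lemma \ref{gl}. For a generic $w=(c,d)$ with $d$ very negative, Lemma \ref{gl} yields that the two real roots of $m$ over $w$ are $r=h(v)$ and $r'=h(v')$, where $v,v'$ are the two $F$-preimages of $w$. Under the hypothesis $\deg_T m=2$, these account for \emph{all} roots of $m$ over $w$ (complex roots come in conjugate pairs, leaving no room). Hence $rr'=m_0(c,d)=-\alpha c^2 d$, which tends to $+\infty$ as $d\to-\infty$.

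To obtain the contradiction I would invoke the rational parametrization of the level set $P=c$ summarized at the start of Section \ref{ext}: the only pole at which $Q(x(h),y(h))$ escapes to $-\infty$ is $h=c$, since the other two poles $h=-1\pm\sqrt{1+c}$ yield finite asymptotic values of $Q$. Consequently, for $d$ sufficiently negative, both $r$ and $r'$ must lie close to $c$ (approached from opposite sides), so $rr'\to c^2$, a finite limit. This contradicts $rr'\to+\infty$, ruling out $\deg_T m=2$.

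The main obstacle is simply spotting the right comparison: one must realise that under the $\deg_T m=2$ hypothesis, \emph{all} roots of $m$ over $w$ are accounted for by $r$ and $r'$, so Vieta relates $m_0(w)$ directly to $rr'$; then the mismatch between the cubic growth $|m_0(c,d)|\sim\alpha c^2|d|$ and the bounded behavior of $rr'$ is immediate. No new computation is required--the ingredients (the explicit shape of $m_0$, genericity along $P=c$, and the pole analysis of the $h$-parametrization) are all already available from the material preceding the statement.
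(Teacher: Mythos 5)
Your proposal is correct and follows essentially the same route as the paper: assume $\deg_T m=2$, so for generic $w$ on a line $P=c$ (with $2<c<4$) the product of the two real roots equals $m_0(w)=-\alpha c^2 d$, which is unbounded as $d\to-\infty$, while the pole analysis of the $h$-parametrization forces both roots toward $c$ and hence the product toward $c^2$. The paper's proof is just a terser statement of exactly this contradiction, referring back to the examples in the preceding proposition.
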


\begin{proof}
If $m$ has degree $2$ and $w$ is generic, then $m_0(w)$ is exactly the product of the two real roots of $m$ over $w$.  But for the last two examples considered in the previous proof, the product tends to $c^2$ as $h$ tends to $c$, whereas $m_0(w)$ is unbounded.
\end{proof}

\begin{Prop}\label{R=cm}
$R(T) = (197/4)m(T)$.
\end{Prop}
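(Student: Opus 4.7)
The claim is equivalent to $\deg_T m = 6$: indeed, $m$ divides $R$ in $k[P,Q][T]$, $m$ is monic, and $R$ has leading coefficient $197/4$, so the two polynomials agree up to the scalar $197/4$ precisely when their $T$-degrees coincide. The earlier results (evenness of $\deg_T m$, Corollary~\ref{not2}, and $\deg_T m \le 6$) leave only the possibilities $\deg_T m \in \{4,6\}$, so the only task is to exclude $\deg_T m = 4$.

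I would rule this out by specializing $P = 0$ and analysing the resulting factorization in the much simpler ring $k[Q][T]$. A direct substitution gives
\begin{equation*}
R(T)|_{P=0} \;=\; T^2 \cdot g(T,Q), \qquad g(T,Q) \;:=\; \tfrac{197}{4}\,T^4 + 104\,T^3 + 63\,T^2 - Q.
\end{equation*}
The factor $g$ is linear in $Q$ with leading coefficient $-1$, so it is irreducible in $k[T,Q]$. Since $T \nmid g$ (its constant-in-$T$ term is $-Q \neq 0$), the prime factorization of $R|_{P=0}$ in the UFD $k[Q][T]$ is $T \cdot T \cdot g$, and the monic divisors of $R|_{P=0}$ are, up to scalars, exactly $1, T, T^2, g, Tg, T^2 g$, whose $T$-degrees are $0,1,2,4,5,6$.

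To finish I would combine this list with information from the preceding propositions. Since $m$ is monic in $T$, $\deg_T(m|_{P=0}) = \deg_T m$, and $m|_{P=0}$ is a monic divisor of $R|_{P=0}$. The preceding proposition gives $m_0 = -\alpha P^2 Q$ with $\alpha > 0$, hence $m_0|_{P=0} = 0$, which forces $T \mid m|_{P=0}$ and narrows the allowed $T$-degrees to $\{1,2,5,6\}$. If $\deg_T m$ were $4$, then $\deg_T(m|_{P=0}) = 4$ would have to appear on that list---which it does not. Contradiction. Hence $\deg_T m = 6$, and comparing leading coefficients yields $R = (197/4)\,m$.

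The one delicate point I anticipate is the choice of specialization. The value $P=0$ is precisely the one for which $m_0$ vanishes (via the previous proposition's formula), while simultaneously $R|_{P=0}$ factors as $T^2$ times an irreducible polynomial of $T$-degree $4$. Together these features conspire to leave no divisor of $R|_{P=0}$ of $T$-degree exactly $4$ that is also divisible by $T$, which is exactly what is needed to rule out $\deg_T m = 4$. Any other choice (say $P = 1$ or $Q = 0$) produces a factorization too rich in intermediate $T$-degrees to carry the argument.
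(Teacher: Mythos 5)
Your argument is correct, and while it shares the paper's key move---specializing at $P=0$---it excludes $\deg_T m=4$ by a genuinely different mechanism. The paper works with the hypothetical quadratic cofactor $D(T)$ in $R=mD$: it first shows, by bookkeeping the total degrees in $P,Q$ of the coefficients, that $D$ must have constant coefficients; it then specializes $P=Q=0$ to identify $D$ as $(197/4)T^2+104T+63$; and it finally observes that such a $D$ cannot divide $R|_{P=0}$, since it would then divide $-QT^2$. You instead analyze $m|_{P=0}$ directly: from $R|_{P=0}=(197/4)\,T^2g$ with $g=(197/4)T^4+104T^3+63T^2-Q$ irreducible in $k[Q,T]$ (linear in $Q$ with unit leading coefficient), unique factorization shows the only monic divisor of $T$-degree $4$ is a scalar multiple of $g$, which is not divisible by $T$; yet the formula $m_0=-\alpha P^2Q$ from the preceding proposition forces $T\mid m|_{P=0}$, and monicity gives $\deg_T(m|_{P=0})=\deg_T m$. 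Your route is shorter---it dispenses with the degree-counting step needed to show the middle coefficient $d_1$ is constant and with the second specialization $Q=0$---at the negligible cost of the irreducibility check on $g$. Both proofs rely on the proposition $m_0=-\alpha P^2Q$, the paper only to normalize the constant term of $D$, you more essentially; all the steps you use are available at this point in the paper, so the argument is a valid and somewhat cleaner substitute.
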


\begin{proof}
As $m(T)$ is a nonconstant divisor of $R(T)$ of even $T$-degree not
equal to $2$, it remains only to show that the degree of
$m$ in $T$ is not $4$.
Suppose to the contrary that
$(m_4T^4+m_3T^3+m_2T^2+m_1T+m_0)
(d_2T^2+d_1T+d_0) = R(T)$,
where the first factor is $m(T)$, the second is a polynomial $D(T)$
 of degree $2$ in $T$,  and all the coefficients shown are in
$k[P,Q]$. 
Note that the equation is in $k[P,Q][T]=k[P,Q,T]$, 
where $P$, $Q$, and $T$ are simply algebraically independent 
variables. 
Equating leading and constant terms on both sides, one
finds that $m_4=1$, $d_2=197/4$ and $d_0$ is a positive constant.
The coefficient $d_1$ must also be constant. For if not,
$j = \deg^t(d_1) > 0$, where $\deg^t$ temporarily denotes the total degree in $P$ and $Q$.
As noted earlier, that $\deg^t$ is at most $3$
for every coefficient of $R(T)$.
Starting with $\deg^t(m_0)=3$ and equating in turn terms of
$T$-degree $1$ through $4$ on both sides of the equation
assumed for $R(T)$, one
readily finds that $\deg^t(m_4)=3+4j$. But $m_4=1$, a contradiction.
Thus $D(T)$ has constant coefficients. 
Next, set $P=0$ in $R(T)$,
obtaining $(197/4)T^6 + 104T^5 + 63T^4 - QT^2$. 
Further setting $Q=0$, one finds that the resulting \p in $T$ alone 
factors as $T^4((197/4)T^2 + 104T+ 63)$. Clearly $D$ must
be exactly the quadratic factor shown. But if $D(T)$  divides $R(T)$,
setting $P=0$ implies it must also divide $-QT^2$, which is absurd.
That contradiction shows that the original assumption to the contrary, that $m$ has $T$-degree $4$, is false.
\end{proof}

Recall that only the coefficient field $k=\R$ is considered 
in this section.

\begin{Cor}\label{six}
The \fe $\R(P,Q) \subset \R(x,y)$ is of degree $6$.
\end{Cor}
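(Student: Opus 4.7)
The plan is to read off the degree directly from the work already done. Earlier in the section it was observed that the birational factorization $\R^2 \to V(m) \to \R^2$ yields the equality $\R(P,Q)(h) = \R(x,y)$; equivalently, the parametrization expressing $x$ and $y$ as rational functions of $f$ and $h$, combined with $f = P - h$, shows that $h$ generates $\R(x,y)$ over $\R(P,Q)$. Hence $[\R(x,y):\R(P,Q)]$ equals the degree of the minimal polynomial $m(T)$ of $h$ over $\R(P,Q)$.

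The minimal polynomial is pinned down by Proposition \ref{R=cm}, which states $R(T) = (197/4)\, m(T)$. Since $R(T)$ was displayed explicitly as a polynomial of $T$-degree $6$, the same holds for $m(T)$. Therefore I would simply conclude
\[
[\R(x,y):\R(P,Q)] = \deg_T m(T) = 6.
\]

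There is no real obstacle here: the corollary is just the translation of Proposition \ref{R=cm} into the language of field degrees, once one has recorded that $h$ is a primitive element of $\R(x,y)$ over $\R(P,Q)$. The only point worth flagging in the writeup is this primitive-element fact, which is immediate from the rational expressions for $x$ and $y$ in terms of $f=P-h$ and $h$.
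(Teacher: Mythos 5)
Your argument is exactly the one the paper intends: the paper's proof of this corollary is simply ``Clear,'' relying on the earlier observation that $\R(P,Q)(h)=\R(x,y)$ together with Proposition \ref{R=cm}, which identifies the degree-$6$ polynomial $R(T)$ (up to the constant $197/4$) as the minimal polynomial of $h$. Your writeup just makes those two ingredients explicit, which is a fine and faithful expansion of the paper's proof.
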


\begin{proof}
Clear. 
\end{proof}

\section{Automorphisms of the extension}\label{aut}

Again, assume 
throughout this section that $k=\R$.  
Let $Z=F^{-1}(A(F))$. 
For any $(x,y)\notin Z$ there is a unique different point 
$(x',y')\notin Z$ with the same image under $F$.  
That defines an involution $\tau$ of $\R^2 \setminus Z$,
that is a \sa real analytic diffeomorphism. 
Suppose $\varphi$ is an automorphism of 
$k(x,y)$ that is not the identity but 
fixes every element of $k(P,Q)$. 
If $\vp$ preserves $h$, then it also preserves $x$ and $y$, since 
they are rational functions of $P$ and $h$, namely 
\begin{equation}\label{ratexp}
x= \frac{ (P-h)(h+1) }{  (P-2h-h^2)^2 }
\text{  and  }
y= \frac{ (P-2h-h^2)^2(P-h-h^2) }{ (P-h)^2 }.  
\end{equation}
So $\vp(h)=h'\ne h$ and, furthermore, by the identity
principle for rational functions over $\R$, they cannot be equal on any nonempty open subset of the $(x,y)$-plane on which $h'$ is defined. 
Since
$\vp$ preserves both components of $F$, the fact that its geometric realization cannot be the identity even locally means that it must be $\tau$  wherever both are defined. 
That implies that there can be at most one such a nonidentity 
automorphism $\vp$. If it exists, then
the rational function $h'$ is analytic at any point 
$(x,y) \notin Z$, since $h'(x,y)=h(x',y')=h(\tau(x,y))$.

That reduces the question of the existence of $\vp$ 
to the following one. 
Is $h'$, a well defined 	real analytic function on the complement of $Z$ in the $(x,y)$-plane, in fact a real rational function?

\begin{Lemma}\label{existence}
There are three component curves of the level set $P=0$ on 
which $h$ is nonconstant and vanishes nowhere. 
On those curves 
$Q=Q(h)=h^2((197/4)h^2+104h+63)$ and is
everywhere positive. 
There is one point of $Z=F^{-1}(A(F))$ on the three curves. 
At all of their other points, $h'$ satisfies both 
$h' \ne h$ and $Q(h')=Q(h)$. 
\end{Lemma}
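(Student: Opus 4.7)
The plan is to exploit Proposition~\ref{R=cm} at $P=0$ together with the explicit rational parametrization of the level set $P=0$ recorded in Section~\ref{specific}.

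First I would set $P=0$ in $R(T)$; all terms carrying a positive power of $P$ drop out, leaving
\[
R(T)\big|_{P=0} = T^2\bigl[(197/4)T^4+104T^3+63T^2-Q\bigr].
\]
Since $R(h)=0$ identically, every point of $\{P=0\}$ satisfies either $h=0$ or $Q = h^2\bigl((197/4)h^2+104h+63\bigr)$. Specializing the parametrization $(x(h),y(h))$ at $c=0$ yields $x(h)=-(h+1)/(h(h+2)^2)$, $y(h)=-h(h+1)(h+2)^2$, whose poles at $h=0$ and $h=-2$ divide the parameter line into three open intervals that trace out three components of $\{P=0\}$; on each, $h$ is literally the parameter, hence nonconstant and nowhere zero. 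The remaining two of the five components lie in the alternative $h\equiv 0$, and a short check using $h=t(xt+1)$ (ruling out $t=0$, which is incompatible with $P=0$) identifies them with the $x>0$ and $x<0$ branches of $xt+1=0$; there $f=0$, hence $u(f,h)=0$ and $Q$ collapses to $-t^2=-1/x^2<0$.

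Next I would verify the sign and $Z$-count assertions. The quadratic $(197/4)h^2+104h+63$ has discriminant $104^2-4(197/4)(63)=-1595<0$ with positive leading coefficient, so it is strictly positive on $\R$; therefore $Q>0$ on the three curves. Intersecting $A(F)$ with $\{P=0\}$, the defining equation collapses to $(Q-104)^2=104^2$, giving $Q=0$ and $Q=208$. By Section~\ref{specific}, $(0,0)$ has no $F$-preimage while $(0,208)$ has exactly one; since $Q>0$ on the three curves but $Q<0$ on the other two, that unique preimage lies on the three curves and accounts for the single point of $Z$ there.

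For the final clause, take any $(x,y)$ on the three curves with $(x,y)\notin Z$; then $F$ is $2$-to-$1$ at $(x,y)$, so there is a unique $(x',y')\ne(x,y)$ with $F(x',y')=F(x,y)=(0,Q)$ and $Q>0$. The sign analysis places $(x',y')$ on the three curves, so $h'=h(x',y')\ne 0$ and, by the polynomial identity, $Q(h')=Q=Q(h)$; if $h'=h$ then the parametrization forces $(x',y')=(x(h'),y(h'))=(x,y)$, a contradiction. The main obstacle I anticipate is the first step's bookkeeping of components of $\{P=0\}$—confirming that exactly three arise from the parametrization and exactly two from the $h\equiv 0$ locus—since once that is pinned down the remaining assertions follow from the discriminant computation, the $A(F)$ equation at $P=0$, and the injectivity of the parametrization.
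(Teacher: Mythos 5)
Your proposal is correct, and its first half (specializing the parametrization at $c=0$ to get three curves on which $h$ is the parameter, specializing $R(h)=0$ at $P=0$ to get the formula for $Q$, and the negative discriminant of $(197/4)h^2+104h+63$ for positivity) coincides with the paper's proof; the paper likewise handles the two $h\equiv 0$ components, with $Q=-t^2<0$, though it defers them to a remark. Where you diverge is in the last two clauses. The paper stays inside the parametrization: it computes that $Q(h)$ is monotone on $h<0$ and on $h>0$, so every positive value is attained exactly twice at parameters of opposite sign, and the single $Z$-point is then pinned down as the unique preimage of $(0,208)$ via $Q(-2)=208$ (the excluded parameter $-2$ accounting for the missing partner). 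You instead import the global multiplicity facts from Section~\ref{specific} (two preimages off $A(F)$, one over $(0,208)$, none over $(0,0)$) and the equation of the Zariski closure of $A(F)$ restricted to $P=0$, then use the sign of $Q$ to force the partner point onto the three curves and injectivity of the parametrization to get $h'\ne h$. Both routes are sound; the paper's is more self-contained (it needs only the graph of $Q(h)$), while yours is shorter at the cost of leaning on the quoted fiber counts and on the five-component bookkeeping for $P=0$, which you correctly flag as the one step requiring care.
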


\begin{proof}
Set $P=0$ in equation \eqref{ratexp}. The resulting
 rational functions $x(h),y(h)$ are defined everywhere
except at $h=-2$ and $h=0$. 
That yields three curves parametrized by $h$.
Since $h(x(P,h),y(P,h))$ simplifies to $h$, 
$h(x(h),y(h))=h$ and hence $h$ assumes every real value 
exactly once on these curves, except that $-2$ and 
$0$ are never assumed. 
Since every level set $P=c$ is a finite disjoint union of 
closed connected smooth curves unbounded at both ends, each of 
the three curves is a connected component of $P=0$.

Set $P=0$ in the identity $R(h)=0$, obtaining
$(197/4)h^6+104h^5+63h^4-Qh^2=0$. 
On the curves, $h \ne 0$, so the claimed formula for 
$Q$ follows. 
Furthermore, $Q$ is positive there, 
since $(197/4)h^2+104h+63$ has negative 
discriminant.

So $F$ maps points on the three curves to the positive $Q$-axis. 
Routine calculation of the derivative of $Q$ shows that $Q$ is 
monotonic decreasing for $h<0$ and monotonic 
increasing for $h>0$. 
Considering the graph of $Q$, one concludes that every positive 
real is the value of $Q$ exactly twice, for \nz values of 
$h$ of opposite signs. Those values of $h$ all correspond to 
unique points of the curves, except $h=-2$. As $Q(-2)=208$, 
the point $(0,208)$, which is the only point of $A(F)$ on 
the positive $Q$-axis, has as its unique inverse under $F$ the 
point $(x(h'),y(h'))$, where $h'$ is the positive real 
satisfying $Q(h')=208$. 
\end{proof}

Remark. To clarify, 
there are two additional component curves of the level set 
$P=0$. On them $h=0$ identically. They have a rational 
parametrization by $t$ with a pole at $t=0$ and $Q=-t^2$ is
everywhere negative on them.

\begin{Lemma}\label{irrational}
Let $h'$ be any real rational function of $h$ that 
satisfies $Q(h')=Q(h)$ for infinitely many values 
of $h \in \R$. Then $h'=h$. 
\end{Lemma}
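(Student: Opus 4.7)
The plan is to reduce the lemma to showing that a single explicit cubic polynomial is irreducible over $\R(h)$. Since $h'$ is a rational function of $h$ and $Q$ is a polynomial, $Q(h')-Q(h)$ is an element of $\R(h)$; if it vanishes at infinitely many real values of $h$, the identity principle for rational functions forces it to vanish identically in $\R(h)$. So it suffices to prove that $h'=h$ is the only element of $\R(h)$ satisfying $Q(h')=Q(h)$ identically.

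Because $T=h$ is visibly a root of $Q(T)-Q(h)\in\R[h][T]$, one can factor $Q(T)-Q(h)=(T-h)\,S(T,h)$, where a short computation gives the explicit cubic
\[
S(T,h)=\tfrac{197}{4}(T^3+T^2h+Th^2+h^3)+104(T^2+Th+h^2)+63(T+h).
\]
The lemma therefore reduces to the statement that $S(T,h)$ has no root in $\R(h)$, or equivalently, since it is cubic in $T$, that $S$ is irreducible over $\R(h)$.

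I would establish this by the rational root theorem in the UFD $\R[h]$. The $T$-leading coefficient $197/4$ is a nonzero constant, so any root in $\R(h)$ already lies in $\R[h]$ and must divide the $T$-constant term $h\bigl(\tfrac{197}{4}h^2+104h+63\bigr)$. The quadratic factor has negative discriminant, so is irreducible over $\R$, and up to a real scalar the only divisors of $h\bigl(\tfrac{197}{4}h^2+104h+63\bigr)$ in $\R[h]$ are $1$, $h$, $\tfrac{197}{4}h^2+104h+63$, and their product. A candidate of $h$-degree at least $2$ is excluded at once by a degree count, since the term $\tfrac{197}{4}T^3$ of $S(T,h)$ would then have strictly larger $h$-degree than every other term and could not be cancelled. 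Only $T=c$ and $T=ch$ with $c\in\R$ remain, and each falls to direct substitution: the $h^3$-coefficient of $S(c,h)$ is the nonzero constant $197/4$, while
\[
S(ch,h)=\tfrac{197}{4}h^3(c+1)(c^2+1)+104h^2(c^2+c+1)+63h(c+1)
\]
has $h^2$-coefficient $104(c^2+c+1)$, which is strictly positive for every real $c$.

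The only point that requires care is the rational-root case analysis, but the $h$-degree bound trims the list of candidates to two one-parameter families, each of which is dispatched by a one-line substitution.
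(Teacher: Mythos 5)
Your proof is correct, and it takes a genuinely different route from the paper's. The paper writes $h'=a/b$ in lowest terms, clears denominators in $Q(h')=Q(h)$ to obtain the quartic identity $a^2((197/4)a^2+104ab+63b^2)=b^4h^2((197/4)h^2+104h+63)$, and then runs a degree and root-multiplicity count (hinging on the negative discriminant of the quadratic form in $a,b$) to force $h'=\lambda h$, finally excluding $\lambda\ne 1$ via $|\lambda|=1$ and $Q(h)-Q(-h)=208h^3$. You instead peel off the known root $T=h$ of $Q(T)-Q(h)$ and reduce the lemma to showing the residual cubic $S(T,h)$ has no root in $\R(h)$, which the rational root theorem over the UFD $\R[h]$ settles: the unit leading coefficient $197/4$ puts any root in $\R[h]$ and forces it to divide $h\bigl(\tfrac{197}{4}h^2+104h+63\bigr)$, leaving a finite list of candidates each killed by a degree count or a one-line substitution. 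Your case analysis is complete (the irreducibility of the quadratic over $\R$ means every linear divisor is a scalar multiple of $h$, so $T=c$ and $T=ch$ really do exhaust the low-degree candidates), and you make fully explicit the passage from ``equality at infinitely many points'' to an identity in $\R(h)$, which the paper elides. What your route costs is only the computation of the cofactor $S(T,h)$; what it buys is a shorter, more mechanical argument that avoids the paper's somewhat delicate bookkeeping of shared real and complex roots of $a$ and $b$. Both arguments ultimately rest on negative discriminants: the paper on that of $\tfrac{197}{4}u^2+104u+63$, you on that same one (for the divisor list) together with that of $c^2+c+1$.
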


\begin{proof}
Suppose $h'=a/b$ for polynomials 
$a,b \in \R[h]$, of respective degrees $r,s$, with no common 
divisor. From $Q(h')=Q(h)$ one obtains 
\begin{equation}\label{abeq}
a^2((197/4)a^2+104ab+63b^2)=b^4h^2((197/4)h^2+104h+63),
\end{equation}
a polynomial equality that is true for all real $h$.  
Since $a/b$ tends to $\infty$ as $h$ does, 
$r>s$. Counting degrees $4r=4s+4$, so $r=s+1$. 
The factor in parentheses on the left is quadratic and 
homogeneous in 
$a$ and $b$ and has negative discriminant, so it is zero for real 
$a$ and $b$ only if both are zero. But that cannot occur for 
any real $h$, for then $a$ and $b$ would have a common root, 
hence a common divisor. 
Thus that factor has only complex roots. It follows that 
$h^2$ divides $a^2$, and so $h$ divides $a$. 
That means that $a$ and $bh$ share a root, each having 
$h$ as a factor. As the quadratic factor in parentheses on the 
right is not zero for any real $h$, any further real roots 
(at $h=0$ or not) in the two sides of equation \eqref{abeq}
would be shared by $a$ and $b$. Again, that is not possible, 
and therefore $a$ has no further real roots and all the roots of 
$b$ are complex. In particular $s$, the degree of $b$, must 
be even. No complex root of $b$ can be a root of $a$, as that 
would imply a common real irreducible quadratic factor. 
So it must be a root of the parenthetical factor 
on the left. Counting roots with multiplicities, 
$2r=2s+2 \ge 4s$, so $s \le 1$. Since $s$ is even, it 
must be $0$, and so $h'=\lambda h$ for a 
\nz $\lambda \in \R$. 
Then for any fixed $h \ne 0$, $Q(\lambda^ih)=Q(h)$ is 
independent of $i>0$, and so $\lambda$ has absolute value 
$1$. It cannot be $-1$, because $Q(h)-Q(-h)=208h^3$. 
\end{proof}

\begin{Prop}\label{notg}
The group of automorphisms 
of the  \fe $\R(P,Q) \subset \R(x,y)$ is trivial. 
\end{Prop}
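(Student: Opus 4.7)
The plan is to combine the two preceding lemmas to derive a contradiction from the assumed existence of a nontrivial automorphism. The groundwork has already been laid in the paragraphs preceding Lemma \ref{existence}: any such automorphism $\varphi$ must send $h$ to a distinct element $h' = \varphi(h) \in \R(x,y)$, and because $\varphi$ fixes every element of $\R(P,Q)$, the element $h'$ is another root of the minimal polynomial $m(T)$ over $\R(P,Q)$. So the whole game is to show that no such $h' \ne h$ can exist.

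The strategy is to restrict $h'$ to the special family of level curves identified in Lemma \ref{existence}. On the three component curves of $P = 0$ on which $h$ is nonconstant, the formulas in \eqref{ratexp} with $P = 0$ provide a rational parametrization by $h$, so both $x$ and $y$ restrict to rational functions of $h$. Hence the rational function $h' \in \R(x,y) = \R(P,Q)(h)$ pulls back to a real rational function, call it $\hat h'(h)$, on each of these curves. Lemma \ref{existence} says that on all but one point of these three curves we have $Q(h') = Q(h)$, which translates to $Q(\hat h'(h)) = Q(h)$ for infinitely many real values of $h$. Lemma \ref{irrational} then forces $\hat h'(h) = h$ identically.

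This conclusion, however, directly contradicts the other half of Lemma \ref{existence}: at all but one point of the same three curves, $h' \ne h$. That contradiction rules out the existence of a nontrivial automorphism $\varphi$, proving Proposition \ref{notg}.

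The main subtlety, rather than any obstacle, is the bookkeeping that guarantees $h'$ really does restrict to a bona fide rational function on each of the three curves, so that Lemma \ref{irrational} applies. This is automatic once one notes that the three curves are Zariski dense in their parametrizing line, so $h'(x(h), y(h))$ is a well-defined element of $\R(h)$. Once that identification is made, the two lemmas interlock immediately and no further computation is required.
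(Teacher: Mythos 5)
Your overall strategy is exactly the paper's: restrict $h'=\vp(h)$ to the three components of the level set $P=0$ singled out in Lemma \ref{existence}, apply Lemma \ref{irrational} to the resulting rational function of $h$, and contradict $h'\ne h$. The one place where your write-up falls short is precisely the ``main subtlety'' you flag and then dismiss. Writing $\R(x,y)=\R(P,h)$, the element $h'$ is a quotient $N(P,h)/D(P,h)$ in lowest terms, and the substitution $P=0$ produces a well-defined element of $\R(h)$ only if $D(0,h)$ is not identically zero, i.e.\ only if there is no uncanceled factor of $P$ in the denominator. Zariski density of the three curves in their parametrizing line says nothing about this: the potential obstruction is that the whole level set $P=0$ lies in the zero locus of $D$, not that the curves are too small a subset of it. So ``this is automatic'' is not a valid justification, and as stated your $\hat h'(h)$ might not exist.

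The gap is easily closed, and the paper closes it with the same Lemma \ref{existence} you already invoke. Since a nontrivial $\vp$ would have to realize the involution $\tau$, one has $h'=h\circ\tau$, which is real analytic off $Z$, and only one point of the three curves lies in $Z$; hence $h'$ takes finite real values at all but one point of those curves (indeed values satisfying $Q(h')=Q(h)$). That is incompatible with $D(0,h)\equiv 0$: coprimality of $N$ and $D$ would then make $N(0,h)$ a nonzero polynomial, so $|h'|$ would be unbounded near all but finitely many points of each curve. With that dichotomy made explicit --- either $h'$ is generically undefined on $P=0$, ruled out by Lemma \ref{existence}, or it restricts to a rational function of $h$, ruled out by Lemmas \ref{existence} and \ref{irrational} together --- your argument is complete and coincides with the paper's.
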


\begin{proof}
If the group contains a nontrivial automorphism $\vp$, 
then $h'=\vp(h)=h(x',y')$ (see above) belongs to
$\R(x,y)=\R(P,h)$. As $\R(P,h)$ is a rational function field 
in two algebraically independent elements over $\R$, the 
restriction of $h'$ to the level set $P=0$ must either be 
generically undefined (uncanceled $P$ in the denominator) or 
a rational function of $h$. 
The first case is ruled out by Lemma \ref{existence}, which 
also contradicts Lemma \ref{irrational} in the second case.
\end{proof}

\section{All {P}inchuk maps}\label{main}

From a geometric point of view,
any two different \pk maps are very closely related.
More specifically, if $F_1=(P,Q_1)$ and $F_2=(P,Q_2)$
are \pk maps then they have the same first component, $P$,
and their second components satisfy $Q_2=Q_1+S(P)$
for a \p $S$ in one variable with real 
coefficients \cite{aspc}.
As maps of $\R^2$ to $\R^2$, therefore, they differ only by
a triangular \p automorphism of the image plane. 
So  all \pk maps are generically two to one,
 and their asymptotic varieties
have algebraically isomorphic embeddings in the image plane.

Let $F$ be the same \pk map as before.  It is defined over
$\Q$. In fact, not only do $P$ and $Q$ have rational coefficients,
but so do $h$ and all terms of the minimal \p $m$ for $h$.
Let $k$ be any  subfield of $\R$, including the possibilities
$k=\Q$ and $k=\R$. Then the powers 
$h^i$ for $i=0,\ldots,5$
form a basis for $k(x,y)$ as a vector space over $k(P,Q)$ 
and the \fe is of degree $6$.

\begin{Prop}
Let $F_1$ and $F_2$ be \pk maps defined over $k$ 
and connected by $S$. 
Then $S$ is uniquely determined and its coefficients
belong to $k$. 
\end{Prop}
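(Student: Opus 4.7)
The plan is to prove the two claims separately: uniqueness of $S$ as an element of $\R[T]$, and rationality of its coefficients in $k$.

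For uniqueness, I would use that $P \in k[x,y]$, being a non-constant polynomial in two independent variables, is transcendental over $\R$. Hence the $\R$-algebra map $\R[T] \To \R[x,y]$ sending $T$ to $P$ is injective. If two candidates $S, S' \in \R[T]$ both realize $Q_2 - Q_1 = S(P) = S'(P)$, then $(S - S')(P) = 0$ in $\R[x,y]$ forces $S = S'$.

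For rationality of the coefficients, the strategy is to restrict to an algebraic curve on which $P$ takes infinitely many values in $k$. The line $y=0$ is the simplest choice: using the defining relations $t = xy-1$, $h = t(xt+1)$, $f = (xt+1)^2(t^2+y)$, $P = f + h$, at $y=0$ one has $t=-1$, $h = x-1$, and $f = (1-x)^2$, whence $P(x,0) = (x-1)^2 + (x-1) \in k[x]$, a non-constant polynomial. Since both $Q_i \in k[x,y]$ by hypothesis, $S(P(x_0,0)) = (Q_2-Q_1)(x_0,0) \in k$ for every $x_0 \in k$. Because $k \supseteq \Q$ is infinite and $x_0 \mapsto P(x_0,0)$ takes infinitely many distinct values on $k$, the polynomial $S \in \R[T]$ takes values in $k$ on an infinite subset of $k$. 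Choosing any $\deg(S)+1$ such arguments and applying Lagrange interpolation exhibits $S$ as a $k$-linear combination of polynomials in $k[T]$, so $S \in k[T]$.

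There is no real obstacle; once $P(\,\cdot\,,0) \in k[x]$ has been identified as non-constant, the remainder is a standard specialization-and-interpolation argument. In particular, nothing from the earlier sections on the minimal polynomial $m$ or the automorphism group is required for this proposition.
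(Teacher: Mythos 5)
Your proof is correct, and its skeleton matches the paper's: uniqueness follows from the transcendence of $P$ over $\R$, and rationality of the coefficients follows by evaluating $S$ at enough points of $k$ where its value is known to lie in $k$, then interpolating. The only genuine difference is how the evaluation points are produced. The paper takes $c \in \Q\setminus\{0,-1\}$ and uses the rational parametrization $x(h), y(h)$ of the level set $P=c$ (with $h \in \Q$ avoiding the poles) to manufacture a rational point of the plane at which $P=c$, so that $S(c)=Q_2(x(h),y(h))-Q_1(x(h),y(h)) \in k$. You instead restrict to the line $y=0$, where $t=-1$, $h=x-1$, $f=(1-x)^2$, hence $P(x,0)=x^2-x$ is a nonconstant polynomial over $\Q$; evaluating at $x_0 \in k$ directly yields infinitely many arguments $c=P(x_0,0)\in k$ with $S(c)\in k$. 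Your route is more self-contained, as it does not invoke the level-set parametrization developed earlier in the paper, and it makes transparent that nothing beyond the explicit formula for $P$ is needed. What the paper's choice buys is consistency with the machinery already in play (the same parametrization is used throughout Sections 3 and 4), but for this proposition your shortcut is perfectly adequate. One small point of care, which you implicitly handle: the interpolation step needs the \emph{arguments} $c$ as well as the values $S(c)$ to lie in $k$, and this holds because $P$ has rational coefficients, so $P(x_0,0)\in\Q(x_0)\subseteq k$.
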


\begin{proof}
$P$ is transcendental over $\R$, so $S$ is unique. 
Let $c \in \Q$ with $c \ne 0$ and $c \ne -1$.
Choose $h \in \Q$ that is not a pole of the  previously described rational parametrization $x(h),y(h)$ of the level set $P=c$.
Since both $x(h)$ and $y(h)$ have formulas 
in $\Q(h,c)$, the real number
$S(c)=Q_2(x(h),y(h))-Q_1(x(h),y(h))$ actually is in $k$.
The coefficients of $S$ can be reconstructed, using rational arithmetic, from its values at any $j > \deg S$ such points $c$,
and so are in $k$.
\end{proof}

\begin{Cor}
Any two \pk maps defined over $k$ 
have one and the same field extension over $k$.
\end{Cor}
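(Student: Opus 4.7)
The plan is to observe that the Corollary follows almost immediately from the preceding Proposition, once one unpacks what ``same field extension'' means. Since both maps share the first component $P$ and both send $\R^2$ into the same ambient plane, the ambient field $k(x,y)$ is fixed; the only thing that could vary is the base field $k(P,Q_i)$ sitting inside $k(x,y)$. So the task reduces to showing that $k(P,Q_1)=k(P,Q_2)$ as subfields of $k(x,y)$.

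To establish that equality, I would argue as follows. By the Proposition just proved, the univariate polynomial $S$ satisfying $Q_2=Q_1+S(P)$ has all of its coefficients in $k$. Hence $S(P)\in k[P]\subseteq k(P,Q_1)$, and consequently $Q_2=Q_1+S(P)\in k(P,Q_1)$. This gives the inclusion $k(P,Q_2)\subseteq k(P,Q_1)$. Reversing the roles of $F_1$ and $F_2$, or equivalently writing $Q_1=Q_2-S(P)$ and noting that $-S$ also has coefficients in $k$, yields the opposite inclusion $k(P,Q_1)\subseteq k(P,Q_2)$. The two subfields therefore agree, and the field extension $k(P,Q_i)\subseteq k(x,y)$ is the same regardless of which Pinchuk map one starts with.

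There is really no substantive obstacle: the whole content of the Corollary is the fact that $S\in k[T]$, which the Proposition supplies. If one wanted to be more pedantic, one could round the proof off by invoking Corollary \ref{six} together with the remark at the start of Section \ref{main} (that the $h^i$, $0\le i\le 5$, form a $k(P,Q)$-basis of $k(x,y)$ for any subfield $k$ of $\R$) to conclude that the common extension is of degree $6$; but that is already packaged into the preceding results and is not needed for the statement as given.
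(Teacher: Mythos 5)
Your argument is correct and matches the paper's intent exactly: the paper states this Corollary without proof, treating it as immediate from the preceding Proposition, and your observation that $S(P)\in k[P]$ gives $k(P,Q_1)=k(P,Q_2)$ inside the common ambient field $k(x,y)$ is precisely the justification being left implicit. No gaps.
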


\begin{Thm}\label{extension}
Let $F$ be any \pk map and let $k$
be $\R$ or any subfield of $\R$
containing  the coefficients of $F$.
Although $F$ is generically two to one as a \p map of
$\R^2$ to $\R^2$, the degree of the associated 
extension of function fields over $k$ is six. 
Furthermore, the extension has no automorphisms 
other than the identity. 
\end{Thm}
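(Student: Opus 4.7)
The plan is to reduce to the specific \pk map $F_0$ of Sections \ref{specific}--\ref{aut}, and then, for the automorphism statement, to extend scalars from $k$ up to $\R$ and apply Proposition \ref{notg}.

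The reduction to $F_0$ is immediate from the Corollary just above: any \pk map defined over $k$ induces the same extension $k(P,Q)\subset k(x,y)$ as $F_0$, which is defined over $\Q\subset k$. Given this reduction, the degree six claim is what the paragraph preceding the theorem already records. Indeed, $m(T)=(4/197)R(T)$ from Proposition \ref{R=cm} has coefficients in $\Q[P,Q]\subset k[P,Q]$, is irreducible over $\R(P,Q)$ and hence over the subring $k(P,Q)$, and has $h$ as a root; combined with the equality $k(x,y)=k(P,h)$ coming from the rational expressions \eqref{ratexp} for $x$ and $y$ in terms of $P$ and $h$, this shows that $1,h,\ldots,h^5$ is a $k(P,Q)$-basis of $k(x,y)$.

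For the automorphism statement, I would let $\vp$ be any $k(P,Q)$-automorphism of $k(x,y)$ and set $h'=\vp(h)\in k(x,y)\subset \R(x,y)$. Since $m(T)$ lies in $\Q[P,Q][T]$ and has degree equal to $[k(x,y):k(P,Q)]=[\R(x,y):\R(P,Q)]=6$, it is the minimal polynomial of $h$ over both $k(P,Q)$ and $\R(P,Q)$, so $h'$ is also a root of $m$ in $\R(x,y)$. The $\R(P,Q)$-algebra homomorphism $\R(x,y)=\R(P,Q)(h)\to \R(x,y)$ sending $h$ to $h'$ is therefore well defined; being an injective $\R(P,Q)$-linear map between two fields of equal finite $\R(P,Q)$-dimension, it is an $\R(P,Q)$-automorphism of $\R(x,y)$. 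By Proposition \ref{notg} it must be the identity, so $h'=h$, and then $\vp$ is the identity because $k(x,y)=k(P,Q)(h)$.

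The only point that demands attention is the equality of minimal polynomials of $h$ over $k(P,Q)$ and over $\R(P,Q)$, which is immediate because Proposition \ref{R=cm} exhibits $m(T)$ explicitly with $\Q$-coefficients and the extension has degree six in both settings; no further delicate argument is needed beyond what has already been established for $F_0$ over $\R$.
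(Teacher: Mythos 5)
Your proposal is correct and follows essentially the same route as the paper: reduce to the specific Pinchuk map via the preceding Corollary, get degree six from the minimal polynomial $m=(4/197)R$ and the basis $1,h,\ldots,h^5$, and rule out nontrivial automorphisms by extending any $k(P,Q)$-automorphism to an $\R(P,Q)$-automorphism and invoking Proposition \ref{notg}. You merely spell out the scalar extension step (via the root $h'$ of $m$) in more detail than the paper's one-line appeal to the $\R$-linearly extended automorphism.
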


\begin{proof}
The conclusions have already been drawn for the earlier  specific 
\pk map $F=(P,Q)$ and for 
$k=\R$ (Corollary \ref{six} and Proposition \ref{notg}). 
Both \pk maps have the same function \fe over $k$, so 
it has degree six. And any nontrivial automorphism 
defined over $k$ defines one over $\R$, 
since the $\R$-linearly extended 
automorphism  preserves $P$, $Q$, and $\R$. 
\end{proof}

\end{document}